\theoremstyle{plain}
\newtheorem{theorem}{Theorem}
\newtheorem{lemma}{Lemma}
\theoremstyle{definition}
\newtheorem{example}{Example}
\theoremstyle{remark}
\DeclareMathOperator{\tr}{Tr}
\DeclareMathOperator{\diag}{diag}
\newcommand{\N}{\mathbb{N}}
\newcommand{\C}{\mathbb{C}}
\newcommand{\F}{\mathbb{F}}
\newcommand{\Q}{\mathbb{Q}}
\newcommand{\R}{\mathbb{R}}
\newcommand{\Z}{\mathbb{Z}}
\newcommand{\x}{\mathbf{x}}
\newcommand{\y}{\mathbf{y}}
\newcommand{\W}{\mathcal{W}}
\newcommand{\cF}{\mathcal{F}}
\newcommand{\bi}{\mathbbm{i}}
\newcommand{\rB}{\mathrm{B}}
\newcommand{\rH}{\mathrm{H}}
\newcommand{\rL}{\mathcal{L}}
\newcommand{\1}{\mathbf{1}}
\begin{document}
\title[An SDP characterization of the Crawford number]
{A semidefinite programming characterization of the Crawford number}
\author{Shmuel Friedland}
\address{
 Department of Mathematics, Statistics and Computer Science,
 University of Illinois, Chicago, IL 60607-7045,
 USA, \texttt{friedlan@uic.edu}
 }
 \author{Cynthia Vinzant}
\address{Department of Mathematics,   
University of Washington, Seattle, WA 98195-4350, USA,
  \texttt{vinzant@uw.edu}
 }
 
 \date{March 2, 2026}
\subjclass[2010]
  	{15A60,15A69,68Q25,68W25, 90C22,90C51}
  	
\keywords{% \noindent {\bf Key words.}   	
Numerical range, the Crawford number, semidefinite programming,  polynomial time approximation.}
\begin{abstract}  
We give a semidefinite programming characterization of the Crawford number. We show that the computation of the Crawford number within $\varepsilon$ precision is computable in polynomial time in the data and $|\log \varepsilon |$.
\end{abstract}
\maketitle

\keywords{}
\section{Introduction}\label{sec:intro}

The \emph{numerical range} of an $n\times n$ complex valued matrix  $C$ is defined to be
\begin{equation*}
\begin{aligned}
\W(C)=\{\x^*C\x  \ \text{ s.t. } \  \x\in\C^n, \|\x\|=1\}
\end{aligned}
\end{equation*}
The classical result of Hausdorff-T\"oplitz states that $\W(C)$ is a compact convex subset of $\C\cong \R^2$.  
The \emph{numerical radius} of $C$ is $r(C)=\max\{|\x^*C\x| \text{ s.t. } \x\in\C^n, \|\x\|=1\}$, which gives a vector norm on $\C^{n\times n}$.  

The \emph{Crawford number} of $c\in\C$ is defined as the distance of $c$ to $\W(C)$:
\begin{equation*}
\chi(c,C)=\min\left\{|z-c| \text{ s.t. }  z\in \W(C)\right\}.
\end{equation*}
It is straightforward to check that $\chi(c,C)=\chi(0,C-cI_n)$, where $I_n$ denotes the identity matrix in $\C^{n\times n}$. Thus we restrict to the case $c =0$ and define
\begin{equation}\label{defkapC}
\chi(C)=\chi(0,C) =\min\left\{|z|  \text{ s.t. }  z\in \W(C)\right\}.
\end{equation}

The Crawford number $\chi(C)$ was introduced by Stewart in \cite[Definition 2.1]{Ste79} to study stability of generalized eigenvalue problems.  See \cite{KLV18} for recent results on the Crawford number and its relation to the subspace methods for eigenvalue optimization problems, and \cite{Uhl13}  for a generalized Crawford number.

The aim of this paper to give a semidefinite programming (SDP) characterization of $\chi(C)$:
\begin{theorem}\label{SDPcr}  Let $C = A+\bi B\in\C^{n\times n}$ where $A$, $B$ are Hermitian. Then
\begin{equation}\label{SDPcr1}
\begin{aligned}
\chi(C) \ \ = \ \ \min \ \frac{1}{2}(u+w) \ \  {\rm{ s.t. }}  \ \   \frac{1}{2}(u-w) -\langle A,X\rangle &=0,\\  v-\langle B,X\rangle&=0,\\ \langle I_n, X\rangle &=1,\\
X\succeq 0, \begin{bmatrix}u&v\\v&w\end{bmatrix}&\succeq 0.
\end{aligned}
\end{equation}
Here $X$ is an $n\times n$ Hermitian matrix variable and $u,v,w$ are variable real numbers. 
\end{theorem}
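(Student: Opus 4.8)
The plan is to pass through two reformulations, neither deep on its own. \textbf{Step 1: linearize the numerical range.} For a unit vector $\x$ put $X=\x\x^*$; then $\x^*A\x=\langle A,X\rangle$, $\x^*B\x=\langle B,X\rangle$, and as $\x$ ranges over unit vectors $X$ ranges over the rank-one density matrices. Since the set $\cD_n$ of Hermitian $X$ with $X\succeq0$ and $\langle I_n,X\rangle=1$ is the convex hull of the rank-one projections $\x\x^*$ (spectral theorem), and $X\mapsto(\langle A,X\rangle,\langle B,X\rangle)$ is linear, the image of $\cD_n$ under this map is the convex hull of $\{(\x^*A\x,\x^*B\x):\|\x\|=1\}=\W(C)$, identifying $\C$ with $\R^2$ as in the introduction. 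By Hausdorff--T\"oplitz $\W(C)$ is already convex, so this image is $\W(C)$ itself, and hence
\[
\chi(C)=\min\bigl\{\sqrt{\langle A,X\rangle^2+\langle B,X\rangle^2}\ :\ X\in\cD_n\bigr\}.
\]

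\textbf{Step 2: an LMI for the Euclidean norm.} The elementary fact I would use is that for $a,b\in\R$,
\[
\sqrt{a^2+b^2}=\min\Bigl\{\tfrac12(u+w)\ :\ \begin{bmatrix}u&v\\ v&w\end{bmatrix}\succeq0,\ \tfrac12(u-w)=a,\ v=b\Bigr\},
\]
which follows because the smallest eigenvalue of $\begin{bmatrix}u&v\\ v&w\end{bmatrix}$ equals $\tfrac12(u+w)-\sqrt{\tfrac14(u-w)^2+v^2}$. Under the two equality constraints, positive semidefiniteness is thus equivalent to $\tfrac12(u+w)\ge\sqrt{a^2+b^2}$, with equality attained at the rank-one choice $u=\sqrt{a^2+b^2}+a$, $w=\sqrt{a^2+b^2}-a$, $v=b$, for which $u,w\ge0$ and $uw=b^2=v^2$.

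\textbf{Step 3: assemble.} In \eqref{SDPcr1} the constraints involving $X$ alone are exactly $X\in\cD_n$, and for each fixed feasible $X$ the remaining constraints force $\tfrac12(u-w)=\langle A,X\rangle$ and $v=\langle B,X\rangle$; by Step 2 the minimum of $\tfrac12(u+w)$ over the leftover freedom in $u,v,w$ equals $\sqrt{\langle A,X\rangle^2+\langle B,X\rangle^2}$. Minimizing over $X\in\cD_n$ and invoking Step 1 gives the optimal value $\chi(C)$, as claimed. I do not anticipate a genuine obstacle: the only points requiring care are the appeal to Hausdorff--T\"oplitz to identify the image of $\cD_n$ with $\W(C)$ rather than merely its convex hull, and the attainment of all the minima, which follows from compactness of $\cD_n$ (hence of $\W(C)$) together with continuity of the objective.
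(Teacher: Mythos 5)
Your proof is correct and follows essentially the same route as the paper: represent $\W(C)$ as the image of the trace-one positive semidefinite matrices under $X\mapsto(\langle A,X\rangle,\langle B,X\rangle)$ and combine this with a $2\times 2$ LMI characterization of $\sqrt{a^2+b^2}$ (the paper's Lemma~\ref{SDPdist2}, which it verifies via diagonal entries and determinant rather than the smallest-eigenvalue formula). The only cosmetic difference is that you prove the numerical-range SDP representation directly from the spectral theorem and Hausdorff--T\"oplitz, whereas the paper cites \cite{H10} for it.
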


We prove this in Section~\ref{subsec: prft1.1}.
This characterization yields that the computation of the Crawford number within $\varepsilon$ precision is polynomially-time computable in the data and $|\log \varepsilon |$. 
See Section~\ref{subsec:comCr} for details.

\subsection{Notation}
Denote $\Z$, $\Q$, $\Z[\bi]=\Z+\bi \Z$,  and $\Q[\bi]=\Q+\bi\Q$  the sets of integers,  rationals, Gaussian integers and Gaussian rationals respectively.   
Throughout the paper, $\F$ denotes the field of real or complex numbers, $\R$ or $\C$, respectively.
We use $\F^{n\times m}$ to denote the space of  $n\times m$ matrices with entries in $\F$.  
The Frobenius norm of $F\in \F^{n\times n}$ is $\|F\|_F=\sqrt{\tr(F^*F)}$, where $F^*=\bar F^\top$. We identify $\F^{n\times 1}$ with $\F^n$-the space of
column vectors $\x=(x_1,\ldots,x_n)^\top$.   Then $\|\x\|=\sqrt{\x^*\x}$. 
Denote by $\F^{n\times n}\oplus \F^{m\times m}$ the space of the block diagonal matrices $\diag(F,G)\in \F^{(n+m)\times (n+m)}$ where $F\in \F^{n\times n}, G\in \F^{m\times m}$.

Let $\rH_{n}(\F)$ denote the real vector space of self-adjoint $n\times n$ matrices over $\F$. 
This is an inner product space with $\langle F,G\rangle=\tr (FG)$. All of the eigenvalues of a matrix $A\in \rH_{n}(\F)$ are real. We call $A$ \emph{positive semidefinite} or \emph{positive definite}, denoted $A\succeq 0$ and $A\succ 0$, if all of its eigenvalues are nonnegative or positive, respectively. 
Given $A, B\in \rH_{n}(\F)$, $A\succeq B$ and $A\succ B$ mean $A-B\succeq 0$ and $A-B\succ 0$, respectively. We use $ \rH_{n,+}(\F)$ to denote the convex cone of positive semidefinite matrices in $\rH_{n}(\F)$. 
Let $\lambda_n(A)\le \cdots\le \lambda_1(A)$ be the eigenvalues of $A$. 
Denote $[n]=\{1,\ldots,n\}$ for $n\in\N$.

We use the following two decompositions of a matrix $C\in\C^{n\times n}$:
\begin{equation}\label{hdecC}
\begin{aligned}
C&=\Re(C)+\bi \Im(C)  \ \ &&\text{ where } \ \  &\Re(C)& =\frac{C+\overline{C}}{2}, &\Im(C)&=\frac{C-\overline{C}}{2\bi}\in\R^{n\times n},\text{ and }\\
C&=A+\bi B  \ \ &&\text{ where } \ \  &A& =\frac{C+C^*}{2}, &B&=\frac{C-C^*}{2\bi}\in \rH_n(\C).
\end{aligned}
\end{equation}

A \emph{semidefinite program (SDP)} is the problem can that be stated as follows (see \cite{VB96}):  
\[
p^*=\inf\{\langle F_0,X\rangle \text{ s.t. } X\in \rH_{m,+}(\R), \langle F_i, X\rangle = b_i \text{ for } i\in [k]\}
\]
where $F_0,\ldots, F_k\in\rH_m(\R), b_1,\ldots,b_k\in\R$.
This is the problem of minimizing a linear function over the intersection of the positive semidefinite cone with the affine linear subspace 
\begin{equation*}
\begin{aligned}
\rL(F_1,b_1,\ldots,F_k,b_k):=\{X\in \rH_m(\R) \ \text{ s.t. } \  \langle F_i,R\rangle=b_i \text{ for all } i\in[k]\}.\\
\end{aligned}
\end{equation*}
With this notation, the SDP above can be rewritten 
\begin{equation}\label{StSDP}
\begin{aligned}
p^*=\inf\{\langle F_0,X\rangle \text{ s.t. } X\in\rL(F_1,b_1,\ldots,F_k,b_k)\cap \rH_{m,+}(\R)\}.
\end{aligned}
\end{equation}
The set 
$\cF=\rL(F_1,b_1,\ldots,F_k,b_k)\cap \rH_{m,+}(\R)$ is called the \emph{feasible region} of this SDP.

\section{SDP characterization and polynomial computability of $\chi(C)$}\label{sec:prfSDP}
\subsection{Proof of Theorem 1}\label{subsec: prft1.1}
We start with the following simple lemma:
\begin{lemma}\label{SDPdist2} Let $z=x+\bi y\in \C$ where $x,y\in \R$.  Then
\begin{equation}
|z|=\min\left\{r  \ \ \rm{ s.t. } \ \   \begin{bmatrix} r+x&y\\y&r-x\end{bmatrix}\succeq 0\right\}.
\end{equation}
\end{lemma}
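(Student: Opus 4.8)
The plan is to reduce the semidefinite constraint to the classical characterization of positive semidefiniteness for $2 \times 2$ real symmetric matrices and then read off the feasible values of $r$ directly. A real symmetric matrix $\begin{bmatrix} a & b \\ b & c \end{bmatrix}$ is positive semidefinite if and only if $a \ge 0$, $c \ge 0$, and $ac - b^2 \ge 0$ (equivalently, all principal minors are nonnegative). Applying this with $a = r+x$, $b = y$, $c = r-x$ gives the three scalar conditions $r + x \ge 0$, $r - x \ge 0$, and $(r+x)(r-x) - y^2 = r^2 - x^2 - y^2 \ge 0$.

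Next I would observe that the first two conditions together say exactly $r \ge |x|$, while the third says $r^2 \ge x^2 + y^2 = |z|^2$. Since $r \ge |x| \ge 0$ forces $r$ to be nonnegative, the third condition is equivalent to $r \ge |z|$. Finally, because $|z| = \sqrt{x^2+y^2} \ge |x|$, the constraint $r \ge |z|$ already implies $r \ge |x|$, so the feasible set for $r$ is precisely the interval $[\,|z|, \infty)$. Hence the minimum is attained at $r = |z|$, which establishes the claimed identity; one may also note the optimal matrix is $\begin{bmatrix} |z|+x & y \\ y & |z|-x \end{bmatrix}$, which indeed has nonnegative trace and zero determinant, confirming feasibility of the optimum.

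There is essentially no obstacle here: the only point requiring a word of care is making sure the minimum is actually achieved rather than merely approached, which is immediate once the feasible region is identified as a closed half-line, and making sure the nonnegativity of $r$ is not forgotten when passing from $r^2 \ge |z|^2$ to $r \ge |z|$. This lemma will later be combined with the variational description of $\chi(C)$ in \eqref{defkapC}: writing $z = \x^* C \x = \langle A, \x\x^* \rangle + \bi \langle B, \x\x^*\rangle$ and relaxing the rank-one matrix $\x\x^*$ to an arbitrary $X \succeq 0$ with $\langle I_n, X\rangle = 1$ will turn the formula of Lemma~\ref{SDPdist2} into the semidefinite program of Theorem~\ref{SDPcr}.
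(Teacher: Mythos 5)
Your proof is correct and follows essentially the same route as the paper: both reduce the constraint to nonnegativity of the diagonal entries $r\pm x$ and the determinant $r^2-x^2-y^2$, and conclude the feasible set is exactly $r\ge\sqrt{x^2+y^2}$, where the minimum is attained. Your extra remarks (care with the sign of $r$, closedness of the feasible half-line, exhibiting the optimal matrix) are fine but add nothing beyond the paper's two-line argument.
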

\begin{proof}
This $2\times 2$ matrix is positive semidefinite if and only if its diagonal entries $r \pm x$ and determinant $r^2 - x^2-y^2$ are nonnegative. This happens if and only if $r\geq \sqrt{x^2+y^2}$.
The minimum such $r$ is exactly $ \sqrt{x^2+y^2}$. 
\end{proof}

\begin{proof}[Proof of Theorem \ref{SDPcr}.]
We recall that the numerical range has an SDP representation: 
\[
\W(C) = \{\langle C, X\rangle=\langle A,X\rangle+\bi\langle B,X\rangle  \textrm{ s.t. }X\in \rH_{n,+}(\C), \langle I_n, X\rangle =1\}.
\]
See, for example, \cite[Lemma 1]{H10}.
This shows that $z = x+\bi y\in \W(C)$ if and only if there exists $X\in \rH_{n,+}(\C)$ with 
$x = \langle  A, X\rangle$, $y = \langle B, X\rangle $ and $\langle I_n, X\rangle= 1$.

For such a choice of $x,y, X$, let $u = r+x$, $v = y$, and $w = r-x$ where $r\in \R$. 
Note that $r=\frac{1}{2}(u+w)$, $x=\frac{1}{2}(u-w)$.
By Lemma~\ref{SDPdist2}, $\diag\left(X, \begin{bmatrix}u&v\\v&w\end{bmatrix}\right)$ is feasible for \eqref{SDPcr1} if and only if $r \geq |z|$.  
The minimum such $r = \frac{1}{2}(u+w)$ is $|z|$. 
\end{proof}

\subsection{Reformulation as a real symmetric SDP}\label{subsec:SDPchar}
We now show that Theorem \ref{SDPcr} can be stated as a standard SDP problem over $\rH_{2n+2}(\R)$.  We repeat briefly some of the arguments in \cite[Section 2]{FL23}.
For any $C\in \C^{n\times n}$, we define 
\begin{equation}\label{defhatC}
\widehat C=\begin{bmatrix}\Re(C)&-\Im(C)\\\Im(C)&\Re(C)\end{bmatrix}\in\R^{(2n)\times (2n)}.
\end{equation}
It is straightforward to check $C$ is Hermitian if and only $\widehat C$ is real symmetric 
and that, in this case, $C$ is positive semidefinite if and only if  $\widehat C$ is. 
That is, 
$C\in\rH_{n}(\C)$ if and only if $\widehat C\in \rH_{2n}(\R)$ and 
$C\in\rH_{n,+}(\C)$ if and only if $\widehat C\in \rH_{n,+}(\R)$.
Moreover $\langle C, D\rangle =\frac{1}{2}\langle \widehat C, \widehat D\rangle$  for any $C,D\in\rH_n(\C)$. Let 
\begin{equation}\label{defhatHnC}
\begin{aligned}
\widehat \rH_{n}(\C) &=\{\widehat C \text{ s.t. }  C\in\rH_n(\C)\}\subset \rH_{2n}(\R) \  \text{ and }\\
\widehat \rH_{n,+}(\C)&=\{\widehat C \text{ s.t. } C\in\rH_{n,+}(\C)\}\subset \rH_{2n,+}(\R).
\end{aligned}
\end{equation} 
The problem \eqref{SDPcr1} is therefore equivalent to 
\begin{equation}\label{SDPcr2}
\begin{aligned}
\chi(C) \ \ = \ \ \min \ \frac{1}{2}(u+w) \ \  {\rm{ s.t. }}  \ \  (u-w) -\langle \widehat A,Y\rangle =0,  2v-\langle \widehat B,Y\rangle=0, \langle I_{2n}, Y\rangle =2,\\
\diag\left(Y,\begin{bmatrix}u&v\\v&w\end{bmatrix}\right)\in \widehat\rH_{n,+}(\C)\oplus \rH_{2,+}(\R).
\end{aligned}
\end{equation}

The feasible region of this optimization problem is unbounded, as we can replace $u,w$ by $u+s,w+s$ for $s\ge 0$.  As in \cite{FL23}, we replace this the following closely related problem.  We add an additional variable $t$ and constraints that  $t\geq 0$ and $u+w+2t=2(\lceil \|C\|_F\rceil+2)$.

\begin{theorem}\label{thm:SDP_realSym} 
Let $C = A+\bi B\in\C^{n\times n}$ where $A$, $B$ are Hermitian. Then
\begin{equation}\label{SDPcr2a}
\begin{aligned}
\chi(C) \ \ = \ \ \min \ \frac{1}{2}(u+w) \ \  {\rm{ s.t. }}  \ \  (u-w) -\langle \widehat A,Y\rangle &=0,\\ 2v-\langle \widehat B,Y\rangle&=0,\\ \langle I_{2n}, Y\rangle &=2,\\ u+w+2t&=2(\lceil \|C\|_F\rceil+2)\\
\diag\left(Y,\begin{bmatrix}u&v\\v&w\end{bmatrix},t \right)&\in \widehat\rH_{n,+}(\C)\oplus \rH_{2,+}(\R)\oplus \rH_{1,+}(\R).
\end{aligned}
\end{equation}
This is equivalent to a standard SDP of the form 
\begin{equation}\label{admis0}
\begin{aligned}
\min \   \langle F_0, Z\rangle \ \  &\text{ s.t. } \ \ Z\in \cF  \ \ \text{ where } \\
 & \cF=\rL(F_1,0,\ldots,F_{N+2},0,F_{N+3},2, F_{N+4},2(\lceil \|C\|_F\rceil+2))\cap \rH_{2n+3,+}(\R)
\end{aligned}
\end{equation}
for $N = n^2+7n+2$ and some $F_0, \hdots, F_{N+4}\in \rH_{2n+3}(\R)$. Moreover, if $C$ has entries in $\Q[\bi]$, then 
the entries of  $F_j$ can be chosen in $\Q$. 
\end{theorem}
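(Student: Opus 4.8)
The plan is to establish the equality \eqref{SDPcr2a} first, then package the result as a standard SDP \eqref{admis0}. For the equality, I would start from the already-proven reformulation \eqref{SDPcr2} (which is equivalent to Theorem \ref{SDPcr} via the block-matrix dictionary $C \mapsto \widehat C$, using that $\langle C, D\rangle = \tfrac12\langle \widehat C, \widehat D\rangle$ and the equivalences $C \in \rH_{n,+}(\C) \iff \widehat C \in \rH_{2n,+}(\R)$). The only change from \eqref{SDPcr2} to \eqref{SDPcr2a} is the introduction of the slack variable $t \ge 0$ together with the affine constraint $u+w+2t = 2(\lceil\|C\|_F\rceil + 2)$. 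So it suffices to show that any optimal solution of \eqref{SDPcr2} can be matched with a feasible solution of \eqref{SDPcr2a} with the same objective value, and conversely that \eqref{SDPcr2a} is at least as constrained. One direction is trivial: dropping $t$ and its constraint from a feasible point of \eqref{SDPcr2a} yields a feasible point of \eqref{SDPcr2} with the same objective. For the other direction, take any optimal $(Y, u, v, w)$ for \eqref{SDPcr2} achieving $\chi(C)$; I need to verify $u + w \le 2(\lceil\|C\|_F\rceil+2)$ so that $t := \lceil\|C\|_F\rceil + 2 - \tfrac12(u+w) \ge 0$ makes it feasible for \eqref{SDPcr2a}. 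Here $u+w = 2\chi(C)$ at optimality, and $\chi(C) = \min\{|z| : z \in \W(C)\} \le r(C) \le \|C\|_F$ (since every point of $\W(C)$ has modulus at most the operator norm, hence at most the Frobenius norm), so indeed $u + w = 2\chi(C) \le 2\|C\|_F \le 2\lceil\|C\|_F\rceil \le 2(\lceil\|C\|_F\rceil + 2)$. Thus the optimal value of \eqref{SDPcr2a} equals $\chi(C)$; the added constraint genuinely removes the unboundedness of the feasible region without cutting off any optimizer.

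Next I would exhibit \eqref{SDPcr2a} in the standard form \eqref{admis0}. The decision variables $Y \in \rH_n(\C)$ (viewed as $\widehat Y \in \widehat\rH_n(\C) \subset \rH_{2n}(\R)$), the $2\times 2$ block $\bigl[\begin{smallmatrix} u & v \\ v & w\end{smallmatrix}\bigr]$, and the scalar $t$ assemble into a single block-diagonal PSD matrix $Z = \diag\bigl(\widehat Y, \bigl[\begin{smallmatrix} u & v \\ v & w\end{smallmatrix}\bigr], t\bigr) \in \rH_{2n+3,+}(\R)$, which is the cone constraint in \eqref{admis0}. The linear side of \eqref{admis0} must encode two kinds of conditions: (i) the four explicit affine equations of \eqref{SDPcr2a}, rewritten as $\langle F_i, Z\rangle = b_i$ using the block structure of $Z$ (e.g.\ $\langle I_{2n}, \widehat Y\rangle = 2$ becomes $\langle \diag(I_{2n}, 0_2, 0), Z\rangle = 2$, and similarly for the others, with the $\widehat A$, $\widehat B$ terms pulled from the $\widehat Y$ block and $u,v,w,t$ read off the remaining diagonal/off-diagonal entries); and (ii) the membership $\widehat Y \in \widehat\rH_n(\C)$, i.e.\ that the upper-left $2n \times 2n$ block of $Z$ has the special form \eqref{defhatC}. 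The block-diagonal structure ($Z$ has zeros linking the three blocks) is itself a collection of linear equations $\langle E_{ij}, Z\rangle = 0$. This is the bookkeeping that produces the count $N = n^2 + 7n + 2$: roughly, $\dim_\R \rH_{2n+3}(\R) = (2n+3)(n+2)$ minus the dimension of the affine slice should match, and the constraints split as the $\widehat{}$-structure conditions on the big block, the off-block-diagonal vanishing conditions, and the four named equations. I would simply count: the constraints forcing $\widehat\rH_n(\C)$-shape inside a $\rH_{2n}(\R)$ block, plus those forcing the block-diagonal pattern across all three blocks, plus the four equations of \eqref{SDPcr2a}, total $N + 4$, and I would reconcile this with $N = n^2 + 7n + 2$ by direct enumeration (this is the one place requiring a careful but elementary count).

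Finally, the rationality claim: all of $F_0, \dots, F_{N+4}$ have entries drawn from $\{0, \pm 1\}$ except for the entries of $\widehat A$ and $\widehat B$, which are the real and imaginary parts of entries of $C = A + \bi B$ — and if $C$ has entries in $\Q[\bi]$, then $A = \tfrac12(C + C^*)$ and $B = \tfrac1{2\bi}(C - C^*)$ have entries in $\Q$, hence so do $\Re(A), \Im(A), \Re(B), \Im(B)$ and therefore $\widehat A, \widehat B$. The right-hand sides $b_i \in \{0, 2, 2(\lceil\|C\|_F\rceil + 2)\}$ are integers (the ceiling is computable from $\|C\|_F^2 = \tr(C^*C) \in \Q$). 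So all data lie in $\Q$ as claimed. I expect the main obstacle to be purely organizational: getting the constraint count $N = n^2 + 7n + 2$ to come out exactly right, which forces one to fix a precise convention for how the $\widehat\rH_n(\C)$ membership and the block-diagonality are each expressed as independent linear equations, avoiding double-counting. Since the paper says it is "repeating briefly some of the arguments in \cite[Section 2]{FL23}," I would follow that reference's conventions verbatim to match the stated $N$.
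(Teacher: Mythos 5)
Your proposal is correct and follows essentially the same route as the paper: introduce the slack $t$ and use $\chi(C)\le r(C)\le \|C\|_F$ to show the extra constraint cuts off no (near-)optimal point of \eqref{SDPcr2}, then encode the block-diagonal and $\widehat\rH_n(\C)$-structure as $N$ homogeneous linear equations (the paper gets $N=n^2+7n+2$ by the codimension count $(n+2)(2n+3)-(n^2+4)$, which agrees with your direct enumeration plan) and the four named equations plus objective as $F_{N+1},\dots,F_{N+4},F_0$ with rational entries. The only cosmetic differences are that the paper argues via $\varepsilon$-optimal feasible points rather than assuming attainment in \eqref{SDPcr2} (attainment does hold, by compactness of $\W(C)$), and that $F_0$ and $F_{N+4}$ need entries $\tfrac12$ and $2$, not just $\{0,\pm1\}$, which does not affect the rationality claim.
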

\begin{proof} Let $p^\star$ denote the optimal value of \eqref{SDPcr2a}.  The optimal value of \eqref{SDPcr2}  is $\chi(C)$. 
All constraints of \eqref{SDPcr2} are constraints of  \eqref{SDPcr2a}, showing that $\chi(C) \leq p^\star$.
Conversely,   let
$\diag\left(Y^\star,\begin{bmatrix}u^\star&v^\star\\v^\star&w^\star\end{bmatrix}\right)$ be a feasible solution of  \eqref{SDPcr2} with $\frac{1}{2}(u^\star+w^\star) = \chi(C) +\varepsilon$. 
Such a solution exists for all sufficiently small $\varepsilon \geq 0$. In particular, we can assume $\varepsilon <2$ and let 
\[t^\star=\lceil\|C\|_F\rceil+2-\frac{1}{2}(u^\star+w^\star)=\lceil\|C\|_F\rceil+2-\chi(C)-\varepsilon>0.\]
Here we use that $\chi(C)\le r(C)\le \|C\|_F$.
Then $\diag\left(Y^\star,\begin{bmatrix}u^\star&v^\star\\v^\star&w^\star\end{bmatrix}, t^\star\right)$ is feasible for \eqref{SDPcr2a} with $\frac{1}{2}(u^\star+w^\star) = \chi(C) +\varepsilon$. 
Therefore $p^\star \leq \chi(C)+\varepsilon$. This holds for all sufficiently small  $\varepsilon \geq 0$, showing that $p^\star \leq \chi(C)$ and thus $p^\star = \chi(C)$.

Since $C\mapsto \widehat C$ gives a linear isomorphism between the real vector spaces 
$\rH_{n}(\C)$ and $\widehat \rH_{n}(\C)$, 
 $\widehat\rH_{n}(\C)$ is a subspace of dimension $n^2$ in $\rH_{2n}(\R)$. Therefore $\widehat \rH_{n}(\C)\oplus \rH_{2}(\R)\oplus \rH_{1}(\R)$ is a subspace of dimension $n^2+4$ in $\rH_{2n+3}(\R)$, 
whose dimension is $(n+2)(2n+3)$. 
Let
\begin{equation*}
N = \textrm{codim}\,(\widehat \rH_n(\C)\oplus \rH_2(\R)\oplus \rH_1(\R))=(n+2)(2n+3)-(n^2+4)=n^2+7n+2.
\end{equation*}
It follows that for some  linearly independent $F_1,\ldots, F_{N}\in\rH_{2n+3}(\R)$,
\begin{equation*}
\widehat \rH_n(\C)\oplus \rH_2(\R)\oplus \rH_1(\R)=\rL(F_1,0,\ldots,F_{N},0)\subset \rH_{2n+3}(\R).
\end{equation*}
One can take $F_1,\ldots, F_{N}$ to have entries in $\{-1,0,1\}$.  
Furthermore, for an element $Z = \diag\left(Y,\begin{bmatrix}u&v\\v&w\end{bmatrix},t \right)$ in $\widehat  \rH_n(\C)\oplus \rH_2(\R)\oplus \rH_1(\R)$, we have
\begin{equation*}
\begin{aligned}
\tfrac{1}{2}(u+w) & = \langle F_{0},Z\rangle &&\text{ where } & F_{0}&=\diag\left({\bf 0}_{(2n)\times (2n)}, \tfrac{1}{2}, \tfrac{1}{2}, 0 \right)
\\
 (u-w) -\langle \widehat A,Y\rangle&=\langle F_{N+1},Z\rangle &&\text{ where }  &F_{N+1}&= \diag(-\widehat A,1,-1,0),\\ 
 2v-\langle \widehat B,Y\rangle&=\langle F_{N+2},Z\rangle &&\text{ where } &F_{N+2}&=\diag\left(-\widehat B, \begin{bmatrix}0&1\\1&0\end{bmatrix},0\right),\\
  \langle I_{2n},Y\rangle &=\langle F_{N+3} ,Z\rangle  &&\text{ where } &F_{N+3}&=\diag(I_{2n},0,0,0),\\ 
 u+w+2t &=\langle F_{N+4} ,Z\rangle  &&\text{ where } &F_{N+4}&=\diag\left({\bf 0}_{(2n)\times (2n)}, 1, 1, 2 \right).
\end{aligned}
\end{equation*}
This shows the equivalence of \eqref{SDPcr2a}  and \eqref{admis0}.
If $C$ has entries in $\Q[\bi]$, then $\widehat A$, $\widehat B\in \Q^{(2n)\times(2n)}$, showing that the entries of $F_{N+1}$ and $F_{N+2}$ are in $\Q$. The entries of all other $F_j$ can be chosen to belong to $\{-1,0,\frac{1}{2}, 1, 2\} \subset \Q$.
\end{proof}

\begin{example}[$n=2$] \label{ex:n=2}
Consider the $2\times 2$ matrix $\tilde{C} =  \begin{bmatrix}0& -4 \bi\\ 2 &0 \end{bmatrix}$ and $c = -3 - \bi$. To compute $\chi(c,\tilde{C})$, we compute $\chi(C)$ for 
$C = \tilde{C} - cI_2 =  \begin{bmatrix}3 + \bi & -4 \bi\\  2 & 3 + \bi \end{bmatrix}$. 
This can be written as $A + \bi B$ where $A$, $B$ are Hermitian. Specifically this gives: 
\[{\small
A = \begin{bmatrix}3 &1 - 2 \bi\\ 1 + 2 \bi & 3 \end{bmatrix}\!\!,  \  B = \begin{bmatrix}1 & \!-2 + \bi \\ \!-2 - \bi \! & 1 \end{bmatrix}\!\!,  \
\widehat A = \begin{bmatrix}3 &1 & 0 & 2\\ 1 & 3  & -2 & 0\\ 0 & -2 & 3 &1 \\  2 & 0 & 1 & 3  
 \end{bmatrix}\!\!,  \
\widehat B= \begin{bmatrix}1 &\!-2 \!& 0 & -1\\ \!-2\! & 1  & 1 & 0\\ 0 & 1 & 1 &\!-2\! \\  -1 & 0 & \!-2 \!& 1
 \end{bmatrix}\!.}
\]
The computation of $\chi(C)$ in \eqref{SDPcr2a} is an SDP over $\rH_7(\R)$. 
The subspace $\widehat \rH_2(\C)\oplus \rH_2(\R)\oplus \rH_1(\R)$ of $\rH_7(\R)$
has dimension $8$ and codimension $N = 20$. 
Let $E_{ij}\in \rH_7(\R)$ denote the matrix with $(i,j)$th and $(j,i)$th entries equal to one and all other entries equal to zero. 
We can write $\widehat \rH_2(\C)\oplus \rH_2(\R)\oplus \rH_1(\R)$ 
as $\rL(F_1,0,\ldots,F_{20},0)$ for 
\begin{equation*}
\begin{aligned}\{F_1,\ldots,F_{20}\} = & \{E_{ij}: 1\le i\le 4, 5\le j \le 7 \}\cup \{E_{i7}: 5\le i\le 6\}
\cup \{E_{i(2+i)} : i=1,2 \} \\ & \cup \{E_{14}+E_{23}\} \cup 
\{E_{ij} - E_{(2+i)(2+j)}: 1\le i\le 2, i\le j \le 2 \}.
\end{aligned}
\end{equation*}
We can then take $F_0$, $F_{21}, \hdots, F_{24}\in \rH_7(\R)$ as in the proof of Theorem~\ref{thm:SDP_realSym}, all of which will have entries in $\{0,\frac{1}{2}, \pm1, \pm 2, \pm 3\}$.
The semidefinite program given in \eqref{SDPcr2a} then computes $\chi(C) \approx 1.923$. 
The numerical range of $C$ and unique point $z\in \W(C)$ with $|z|=\chi(C)$ are shown in Figure~\ref{fig:ex}. 
\end{example}

\begin{figure}
\includegraphics[height=1.5in]{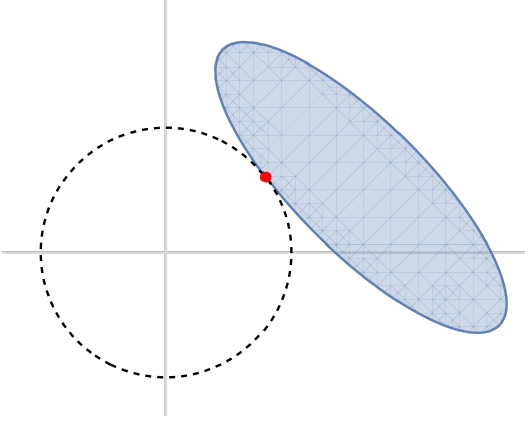} 
\caption{The numerical range from Example~\ref{ex:n=2} and $z\in \W(C)$ minimizing $|z|$.}
\label{fig:ex}
\end{figure}

\subsection{Complexity results for the Crawford number}\label{subsec:comCr}
A polynomial-time bit complexity (Turing complexity) of the SDP problem \eqref{StSDP}
 using the ellipsoid method was shown by  Gr{\"o}tschel-Lov\'asz-Schrijver \cite{GLS81}, under certain conditions, that are stated below.   It used the ellipsoid method of Yudin-Nemirovski \cite{YN76}, and  was inspired by the earlier proof of Khachiyan \cite{Kha79} of the polynomial-time solvability of linear programming.

Let
$\rB(G,r)=\{Y\in\R^{m\times m}: \|Y-G\|_F\le r\}$ denote the ball of radius $r\ge 0$ centered at  $G\in\R^{m\times m}$.
The assumptions of  Gr{\"o}tschel-Lov\'asz-Schrijver \cite{GLS81}  for the SDP \eqref{StSDP} are: 
\begin{equation}\label{pcond}
\begin{aligned}
\text{(1) } &F_0,\ldots,F_k\in \Q^{m\times m}, \ b_1, \ldots, b_k\in \Q\\
\text{(2) }&\text{There exists $G\in \cF\cap \Q^{m\times m}$ with $G\succ 0$ and $0<r\le R$ such that}\\
&\rB(G,r)\cap \rL(F_1,b_1,\ldots,F_k,b_k)\subseteq \cF\subseteq \rB(G,R)\cap \rL(F_1,b_1,\ldots,F_k,b_k).
\end{aligned}
\end{equation}

Then the ellipsoid algorithm finds $X\in \cF\cap \Q^{m\times m}$, such that $\langle F_0,X\rangle\le p^\star +\varepsilon$ \cite[Theorem~1.1]{deklerk_vallentin}.  The complexity of the ellipsoid algorithm is polynomial in the
bit sizes of $G,F_0,\ldots,F_k$, $|\log \varepsilon|$ and $\log \frac{R}{r}$.
Hence, for polynomial-time complexity of $\chi(C)$ we need to assume that 
$C\in\Q[\bi]^{n\times n}$, which occurs if and only if $\widehat C\in\Q^{(2n)\times (2n)}.$

It was shown by  Karmakar \cite{Kar84} that his projective method,  which is related to the Interior Point Method (IPM),  is superior to the ellipsoid method for linear programming problems.   De~Klerk-Vallentin \cite[Theorem 7.1]{deklerk_vallentin} showed polynomial-time complexity for an SDP problem using  IPM method  by applying the short step primal interior point method combined with  Diophantine approximation under the above conditions.

Clearly,  if $C$ is the zero matrix, then $\chi(C)=0$.  Hence we can assume that $n\ge 2$ and $C\ne 0$.   We claim that we can assume that 
$\widehat C\in\Z^{(2n)\times (2n)}\setminus\{0\}$, i.e.,  $C\in \Z[\bi]$.
Indeed, let $\ell\ge 1$ be the products of all denominators (viewed as positive integers) of the rational entries of $\widehat C\ne 0$.  Let $\tilde C=\ell \widehat C$.  Then $\chi(\widehat C)=\frac{1}{\ell}\chi(\tilde C)$.  The following theorem gives an SDP 
characterization for which we can apply the poynomial-time complexity of $\chi(C)$.
\begin{theorem}\label{compCrthm}  
Let $C\in \Q[\bi]^{n\times n}\setminus\{0\}$ and $n\ge 2$.  
Let $(1/n)\tr(C) = x+\bi y$ where $x,y\in \R$.
The SDP \eqref{admis0} computing $\chi(C)$ satisfies  \eqref{pcond} with $r=1/n$,  $R=12+4\lceil\|C\|_F\rceil$ and 
\begin{equation}\label{pcond1}
\begin{aligned}
G=\diag\left(\tfrac{1}{n}I_{2n}, S,1\right) \text{ where } S=\begin{bmatrix}\lceil \|C\|_F\rceil +1+x&y\\ y&\lceil \|C\|_F\rceil +1-x\end{bmatrix}.
\end{aligned}
\end{equation}
In particular, if $C\in \Z[\bi]^{n\times n}$ then the ellipsoid method, or the IPM using the short step primal interior point method combined with  Diophantine approximation, imply that the Turing complexity of $\varepsilon$-approximation of $\chi(C)$ is polynomial in $n$,  the bit sizes of the entries of $C$, and $|\log\varepsilon|$.
\end{theorem}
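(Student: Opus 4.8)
The plan is to verify directly that the point $G$ in \eqref{pcond1} lies in the feasible region $\cF$ of \eqref{admis0}, is positive definite, and that the two inclusions $\rB(G,r)\cap \rL(\cdots)\subseteq \cF \subseteq \rB(G,R)\cap \rL(\cdots)$ hold with the stated $r$ and $R$. Once this is done, condition \eqref{pcond}(1) is immediate since $C\in\Q[\bi]^{n\times n}$ forces $\widehat A,\widehat B\in\Q^{(2n)\times(2n)}$ and the remaining $F_j$ and the right-hand sides $2$, $2(\lceil\|C\|_F\rceil+2)$ are rational, and then the complexity statement follows by quoting \cite{GLS81}, \cite[Theorem 1.1]{deklerk_vallentin} (ellipsoid) and \cite[Theorem 7.1]{deklerk_vallentin} (IPM), together with the rescaling remark $\chi(\widehat C)=\tfrac1\ell\chi(\tilde C)$ reducing the integer case to bit-size bounds.

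First I would check feasibility of $G$. Writing $Z = G = \diag(\tfrac1n I_{2n}, S, 1)$, the block $Y = \tfrac1n I_{2n}$ gives $\langle I_{2n}, Y\rangle = 2$ (the third constraint), and $\langle \widehat A, Y\rangle = \tfrac1n\tr(\widehat A) = \tfrac2n\tr(A) = 2x$ since $\tr(\widehat A) = 2\tr(A)$ and $\Re((1/n)\tr C) = x$; similarly $\langle \widehat B, Y\rangle = 2y$. With $S = \begin{bmatrix} r_0 + x & y \\ y & r_0 - x\end{bmatrix}$ for $r_0 = \lceil\|C\|_F\rceil+1$, reading off $u = r_0+x$, $v = y$, $w = r_0 - x$, $t = 1$, we get $u - w = 2x = \langle\widehat A, Y\rangle$, $2v = 2y = \langle\widehat B, Y\rangle$, and $u + w + 2t = 2r_0 + 2 = 2(\lceil\|C\|_F\rceil + 2)$, so all four linear constraints of \eqref{SDPcr2a} hold. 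Positive definiteness: $\tfrac1n I_{2n}\succ 0$, the scalar block $1 > 0$, and $S\succ 0$ because its diagonal entries are $r_0 \pm x$ with $|x| \le |(1/n)\tr C| \le \tfrac1n\|C\|_F \le \|C\|_F < r_0$ and its determinant is $r_0^2 - x^2 - y^2 \ge r_0^2 - \|C\|_F^2 > 0$ (using $x^2+y^2 = |(1/n)\tr C|^2 \le \|C\|_F^2$). Hence $G \in \cF\cap\Q^{m\times m}$ with $G\succ 0$, and $\widehat{G}$-type membership is not needed since $\tfrac1n I_{2n} = \widehat{(1/n)I_n}$ lies in $\widehat\rH_{n}(\C)$.

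Next the two inclusions. For the inner inclusion, I would bound the smallest eigenvalue of $G$ from below: $\lambda_{\min}(\tfrac1n I_{2n}) = \tfrac1n$, $\lambda_{\min}(1) = 1$, and $\lambda_{\min}(S) \ge$ (a quantity at least $\tfrac1n$, which one checks from $\det S \ge r_0^2 - \|C\|_F^2 \ge 1$ and trace $2r_0$, giving $\lambda_{\min}(S) \ge \det S / \tr S \ge 1/(2r_0)$; a cruder bound $\tfrac1n$ suffices since $n\ge 2$ and one can absorb constants — the point is only that $\lambda_{\min}(G) \ge 1/n$). Any $Z \in \rB(G, 1/n)$ satisfies $\|Z - G\|_F \le 1/n \le \lambda_{\min}(G)$, so $Z \succeq 0$; intersecting with the affine subspace $\rL(F_1,b_1,\ldots)$ then lands in $\cF$. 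For the outer inclusion, any $Z \in \cF$ has $Y \succeq 0$ with $\langle I_{2n}, Y\rangle = 2$ so $\|Y\|_F \le \tr Y = 2$; the scalar block $u,v,w \ge 0$ with $u + w = 2\chi(C)\cdot(\text{something}) \le u+w+2t = 2(\lceil\|C\|_F\rceil+2)$ bounds $\|\begin{bmatrix} u & v\\ v& w\end{bmatrix}\|_F$ by $O(\lceil\|C\|_F\rceil)$ (using $v^2 \le uw$), and $0 \le t \le \lceil\|C\|_F\rceil+2$. Adding the Frobenius norms of the three diagonal blocks and then $\|G\|_F$, one gets $\|Z - G\|_F \le \|Z\|_F + \|G\|_F \le R = 12 + 4\lceil\|C\|_F\rceil$ after collecting constants; I would do this arithmetic carefully but it is routine.

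The main obstacle is getting the constants in $r$ and $R$ to come out exactly as stated — in particular, confirming that $1/n$ is a legitimate lower bound for the inradius (this needs $\lambda_{\min}(S) \ge 1/n$, which is where the choice $r_0 = \lceil\|C\|_F\rceil + 1$ rather than $\lceil\|C\|_F\rceil$ matters, since it forces $\det S \ge 1$), and bookkeeping the outer-radius estimate so that the sum of the block norm bounds plus $\|G\|_F$ fits under $12 + 4\lceil\|C\|_F\rceil$. Everything else is a direct substitution into \eqref{pcond} and a citation of the known polynomial-time SDP solvers; the rescaling to $C \in \Z[\bi]^{n\times n}$ only changes bit sizes by a polynomial factor, so the final Turing-complexity claim follows.
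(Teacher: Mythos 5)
Your proposal follows essentially the same route as the paper: verify condition \eqref{pcond} directly for the given $G$, $r=1/n$, $R=12+4\lceil\|C\|_F\rceil$, and then cite Gr\"otschel--Lov\'asz--Schrijver and de~Klerk--Vallentin. Your explicit check that $G$ satisfies the linear constraints (via $\langle \widehat A,\tfrac1n I_{2n}\rangle=2x$, $\langle \widehat B,\tfrac1n I_{2n}\rangle=2y$, $u+w+2t=2(\lceil\|C\|_F\rceil+2)$) is correct and is a welcome elaboration of a step the paper only asserts.

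The one place where your written argument does not close is the inradius bound $\lambda_{\min}(S)\ge 1/n$. The estimate you propose, $\lambda_{\min}(S)\ge \det S/\tr S$ with $\det S\ge 1$ and $\tr S=2r_0$, only yields $1/(2r_0)=1/(2\lceil\|C\|_F\rceil+2)$, which can be far smaller than $1/n$; and you cannot ``absorb constants,'' because $r=1/n$ is fixed in the statement. The fix is the paper's observation: since $x^2+y^2=|\tfrac1n\tr C|^2\le r(C)^2\le\|C\|_F^2$ (note $\tfrac1n\tr C\in\W(C)$) and $r_0=\lceil\|C\|_F\rceil+1$, the matrix $S-I_2$ has nonnegative diagonal entries $\lceil\|C\|_F\rceil\pm x$ and nonnegative determinant $\lceil\|C\|_F\rceil^2-x^2-y^2$, so $S\succeq I_2$ and hence $\lambda_{\min}(G)=1/n$ exactly as you need; the paper then argues blockwise with $\|W_j\|_F\le 1/n$, which is equivalent to your $\lambda_{\min}$ argument. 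For the outer inclusion, your block-by-block bookkeeping does land under $R$, but only if you bound $u+w+t\le u+w+2t=2(\lceil\|C\|_F\rceil+2)$ (using $t\ge 0$), giving $\|Z\|_F\le 2+(u+w)+t\le 6+2\lceil\|C\|_F\rceil$; bounding $u+w$ and $t$ separately gives $8+3\lceil\|C\|_F\rceil$ per matrix and overshoots $R$ after the triangle inequality. The paper gets the same $6+2\lceil\|C\|_F\rceil$ in one stroke from $\|M\|_F\le\tr M$ for $M\succeq 0$, applied to $Z$ and to $G$.
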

\begin{proof}  
We write $C = A+\bi B \in \Q[\bi]^{n\times n}$ where $A, B$ are Hermitian. The entries of $A,B$ 
also belong to $\Q[\bi]$ and the entries of $\widehat A,\widehat B$ belong to $\Q$. 
Since the entries of $F_1,\ldots, F_{N}$ were chosen to have entries in $\{-1,0,1\}$, 
all of the matrices $F_0,\ldots, F_{N+4}$ used in \eqref{admis0} belong to $\Q^{(2n+3)\times (2n+3)}$, showing that assumption (1) of \eqref{pcond} is satisfied.

We take $x$, $y$, $r$, $R$, $S$, and $G$ as in the statement of the theorem and  let 
$$\rL = \rL(F_1,0,\ldots,F_{N+2},0,F_{N+3},2, F_{N+4},2(\lceil \|C\|_F\rceil+2))$$
denote the affine linear subspace from \eqref{admis0}. 

First we claim that $G\succ 0$. For this, it suffices to show that $S\succ 0$. 
Note that  $X=\frac{1}{n}I_n$ is positive definite and satisfies $\langle I_n, X\rangle = 1$.  
Therefore $x+\bi y=\frac{1}{n}\tr C = \langle C, X\rangle \in\W(C)$.   
It follows that $|x+\bi y|\le r(C)\le \|C\|_F$. 
In particular, $\|C\|_F\pm x \geq 0$ and $x^2+y^2\leq \|C\|_F^2$, showing that the diagonal entries and determinant of $S-I_2$ are nonnegative.  
It follows that $S -I_2\succeq 0$ and thus $S\succeq I_2 \succ 0$.  

We next show that 
$\rB(G,\frac{1}{n})\cap \rL\subset \cF$.
Let $Z\in \rB(G,\frac{1}{n}) \cap \rL$.  It suffices to show that $Z\succeq 0$. 
Since $\rL \subset \rH_{2n}(\R)\oplus \rH_2(\R)\oplus \rH_1(\R)$, 
\begin{equation}\label{GRrho}
\begin{aligned}
Z=G+W &\text{ where } W=\diag(W_1, W_2, W_3)\in \rH_{2n}(\R)\oplus \rH_2(\R)\oplus \rH_1(\R)\\ 
&\text{ with } \|W\|_F^2 =  \|W_1\|_F^2+\|W_2\|_F^2+\|W_3\|_F^2\le (1/n)^2.
\end{aligned}
\end{equation}
In particular $\|W_j\|_F\le 1/n$ for each $j=1,2,3$.
Note that the block diagonal matrix $Z$ is positive semidefinite if and only if 
each of its blocks, $\frac{1}{n}I_{2n}+W_1$, $S+W_2$ and $1+W_3$, are. 

For any matrix $M\in \rH_m(\R)$, the Frobenius norm $\|M\|_F$ bounds 
the magnitude of all eigenvalues of $M$. 
In particular, if $\|M\|_F \le \mu$, then $\mu I_m +M \succeq 0$. 
It follows that 
\begin{equation*}
\begin{aligned}
\tfrac{1}{n}I_{2n}+W_1\succeq 0, \ \ S+W_2 \succeq I_2+W_2\succeq 0, \  \text{ and }  \ 
1+W_3\succeq 0.
\end{aligned}
\end{equation*}
For the second inequality, we use that $S \succeq I_2$ as shown above.
Thus $Z\succeq 0$.

We now show that $\cF\subseteq \rB(G,R)$.   To do this, we use the trace of a positive semidefinite matrix to bound its Frobenius norm. 
For any matrix $M\in \rH_{m,+}(\R)$ we have $\tr(M)=\sum_{i=1}^m \lambda_i(M)$.
Since $\|M\|_F^2=\sum_{i=1}^m \lambda_i^2(M)$ and $\lambda_i(M)\geq 0$ we find that 
\begin{equation*}
(\tr(M))^2 - \|M\|_F^2 = \sum_{i\neq j} \lambda_i(M) \lambda_j(M) \geq 0.
\end{equation*}
In particular, $ \|M\|_F\le \tr(M)$. 
Now suppose that 
$Z = \diag\left(Y,\begin{bmatrix}u&v\\v&w\end{bmatrix},t \right)\in \cF$. 
Then 
\begin{equation*}\tr(Z)= \tr(Y) + u+w + t \leq 2+ u+w + 2t = 6+2\lceil\|C\|_F\rceil.
\end{equation*}
Here we use the constraints $\tr(Y) = \langle I_n, Y\rangle =2$, $t\geq 0$ and $ u+w + 2t = 2(\lceil\|C\|_F\rceil +2)$. 
By the arguments above, $ \|Z\|_F \leq 6+2\lceil\|C\|_F\rceil$. 
Similarly, $G\in \cF$ and so  $ \|G\|_F \leq 6+2\lceil\|C\|_F\rceil$.
Then by the triangle inequality
\begin{equation*}
\|Z-G\|_F\le \|Z\|_F+\|G\|_F\le 12+4\lceil\|C\|_F\rceil = R.
\end{equation*}
Therefore $\cF\subset \rB(G,R)$.

 Assume that $C\in \Z[\bi]$. Clearly, $\frac{1}{n}\tr C\in \Q[\bi]$.  Hence,  $G\in \Q^{(2n)\times (2n)}.$   The ellipsoid method, or the IPM using the short step primal interior point method combined with  Diophantine approximation, imply that the Turing complexity of $\varepsilon$-approximation of $\chi(C)$ is polynomial in $n$,  the bit sizes of the entries of $C$, and $|\log\varepsilon|$.
\end{proof}

This paper shows that $\chi(C)$ can be computed using the standard software for SDP problems.  This is also the case for the numerical radius and its dual norm, as pointed out in \cite{FL23}.   In numerical simulations in \cite{MO23} it was shown that the known numerical methods for computing $r(A)$ are faster then the SDP methods.  We suspect that the recent numerical methods in \cite{KLV18}  for computing the Crawford  number are superior to the SDP methods applied to the characterizations of the Crawford number discussed in this paper.
\section*{Dedication}
This paper is dedicated to the first author's friend Eitan Tadmor.
\section*{Acknowledgment}  The authors thank the referee for useful remarks.
The work of Shmuel Friedland is partially supported by the Simons Collaboration Grant for Mathematicians. The work of Cynthia Vinzant is partially supported by NSF grant No.~DMS-2153746.

\end{document}